\documentclass[12pt, reqno]{amsart}

\usepackage{amsmath,amsthm,amsfonts,amscd,amssymb}
\usepackage{times,euler,eucal,eufrak,graphicx}
\usepackage{hyperref}

\usepackage{caption} 
\usepackage{subcaption} 

\title[Haagerup's inequality and additivity violation of the MOE]
{Haagerup's inequality and additivity violation of the Minimum Output Entropy}
\author {Beno\^\i{}t Collins}
\address{Department of Mathematics, Kyoto University} \email{collins@math.kyoto-u.ac.jp}

\theoremstyle{plain}
\newtheorem{lemma}{Lemma}[section]
\newtheorem{theorem}[lemma]{Theorem}
\newtheorem{proposition}[lemma]{Proposition}

\theoremstyle{definition}

\theoremstyle{remark}

 \DeclareMathOperator{\Tr}{Tr}

\newcommand{\C}{{\mathbb{C}}}

\begin{document}

\begin{abstract}
We give a simple and conceptual proof of the fact that random unitary channels yield violation of 
the Minimum Output Entropy additivity. The proof relies on strong convergence of random 
unitary matrices
and Haagerup's inequality. 
\end{abstract}

\maketitle

\section{Introduction}

During the last decade, a crucial problem in quantum information was to determine whether one can find two quantum channels
$\Phi_i: B(H_{j_i})\to B(H_{k_i}), i=\{1,2\}$, such that 
$$H_{min}(\Phi_1\otimes\Phi_2)<H_{min}(\Phi_1)+H_{min}(\Phi_2).$$
We refer to section \ref{sec:notation} for definitions.

This problem was known as the \emph{MOE additivity problem}, and it was crucial because 
it is equivalent to the additivity of the Holevo capacity, as well as other quantities. 
We refer to the survey \cite{CN2016}
and bibliography therein for references.
Additivity was proven to be false by  Hastings \cite{Hastings2009}.
Later, generalizations and improvements were made 
in \cite{FKM2010,BrandaoHorodecki2010,FK2010,AubrunSzarekWerner2011,Fukuda2014,BCN2013}.

Each proof of the above result relies on specific random counterexamples. The initial counterexamples
required the development of highly specific counterexamples (tubular neighbourhoods \cite{Hastings2009,FKM2010}), whereas
later on, it was realized that there are relation with conceptual tools already available
 (Dvoretzky's theorem \cite{AubrunSzarekWerner2011}, large deviation theory \cite{BrandaoHorodecki2010}).

The proofs that achieve the smallest output dimension and the largest violation rely on free probability
theory, and required the development of specific tools (\cite{BCN2013}). They were specific to 
quantum channels obtained with random Stinespring isometries, and did not work
with the initial counterexample of Hastings -- random unitary channels.

The purpose of this note is to fill this gap, and explain from a simple conceptual point of view why
random unitary channels yield violation, and in the meantime, obtain an (almost) elementary 
proof with operator algebras. 
The main tool we use is Haagerup's inequality, which we recall in section \ref{sec:haagerup}.

\emph{Organization of the paper}:
This paper is organized as follows. After this introductory part, section \ref{sec:preliminary} sets up some notations
and gathers prerequisite results. Finally section \ref{sec:main} provides the main estimate and the main result. 

\emph{Acknowledgements}:
The author would like to thank Ion Nechita for  discussions
and Motohisa Fukuda for constructive
comments on a preliminary version of this draft. 
The author was supported by NSERC, JSPS Kakenhi,  and ANR-14-CE25-0003.

\section{Preliminary material}\label{sec:preliminary}

\subsection{Notations of Quantum Information Theory}\label{sec:notation}

We denote by $H$ an Hilbert space, which we assume to be finite dimensional. 
$B(H)$ is the set of bounded linear operators on $H$, and 
$D(H)\subset B(H)$ is the collection of trace 1, positive operators -- known as
\emph{density matrices}.

For $X\in D(H)$, its von Neumann entropy is defined by functional
calculus by $H(X)=-\Tr X\log X$, where $0\log 0$ is assumed by continuity to be zero.

A quantum channel $\Phi : B(H_1)\to B(H_2)$ is a completely positive trace preserving linear map. 
The Minimum Output Entropy of $\Phi$ is
$$H_{min}(\Phi)=\min_{X\in D(H_1)} H(\Phi (X)).$$
We refer to the survey \cite{CN2016} for further properties.

\subsection{Strong convergence}
A \emph{$*$-non-commutative tracial probability space} is a pair $(\mathcal{A},\tau)$ where
$\mathcal{A}$ is a unital $*$-algebra, and $\tau$ is a positive trace satisfying $\tau (1)=1$.

We say that a sequence of $k$-tuples $(a_i^{(n)})_{i=1}^k$ in a sequence of 
$*$-non-commutative probability spaces
$(\mathcal{A}_n,\tau_n)$,  converges \emph{in distribution} to 
the distribution of  $(a_1,\ldots ,a_k)\in (\mathcal{A},\tau)$ iff 
for any free $*$-word $w$, in $k$ variables, 
$\tau_n (w(a_i^{(n)}))\to \tau (w(a_i))$.

Likewise, a sequence is said to converge \emph{strongly in distribution} iff it converges in distribution, and 
in addition, for any non-commutative polynomial $P$, its operator norm converges
$$\|P(a_1^{(n)},\ldots ,a_k^{(n)})\|\to \|P(a_1,\ldots ,a_k)\|.$$
In this definition, we assume that the operator norm is given by the distribution, i.e.
$$\|P(a_1^{(n)},\ldots ,a_k^{(n)})\|=\lim_{q\to\infty}\|P(a_1^{(n)},\ldots ,a_k^{(n)})\|_q,$$ 
and 
\begin{equation}\label{norm-limit}
\|P(a_1,\ldots ,a_k)\|=\lim_{q\to\infty}\|P(a_1,\ldots ,a_k)\|_q,
\end{equation}
where the $q$-norm $\|X\|_q$ is by definition $(\tau ((XX^*)^{q/2}))^{2/q}$ (resp. 
$(\tau_n ((XX^*)^{q/2}))^{2/q}$)

Let  $(a_i^{(n)})_{i=1}^k$ be a sequence of $n\times n$ matrices, viewed as elements of the non-commutative
probability space $(M_n,n^{-1}Tr)$ and assume that it converges strongly in distribution towards
a $k$-tuple of random variables
$(a_1,\ldots ,a_k)\in (\mathcal{A},\phi)$,
then 
\begin{theorem}\label{thm:cm}
Let $U_n$ be an $n\times n$ Haar distributed unitary random matrix (independent from
$(a_i^{(n)})_{i=1}^k$). Then the family
$$(a_1^{(n)},\ldots ,a_k^{(n)},U_n)$$
almost surely converges strongly too, towards the $k+2$-tuple of random variables
$(a_1,\ldots ,a_k,u)$, where $u$ are is a Haar unitary element, free from $(a_1,\ldots ,a_k)$.
\end{theorem}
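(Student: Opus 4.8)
The distributional half of the claim is classical: by Voiculescu's theorem a Haar unitary is asymptotically free from any family of matrices converging in distribution, and this asymptotic freeness holds almost surely. Thus $\tau_n(w(a_i^{(n)}, U_n)) \to \tau(w(a_i, u))$ almost surely for every $*$-word $w$, so the only real content is the convergence of operator norms. Moreover, for a fixed noncommutative polynomial $P$ the lower bound $\liminf_n \|P(a_i^{(n)}, U_n)\| \ge \|P(a_i, u)\|$ is automatic from distributional convergence, since $\|X\| = \lim_q \|X\|_q$ and each $q$-norm is determined by the moments of $PP^*$, which converge. Everything therefore reduces to the upper bound $\limsup_n \|P(a_i^{(n)}, U_n)\| \le \|P(a_i, u)\|$ almost surely.

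My first step for the upper bound would be to remove the randomness by concentration. The map $U \mapsto \|P(a_i^{(n)}, U)\|$ is Lipschitz on $\U(n)$ with a controlled constant, so by the Gromov--Milman concentration inequality on the unitary group the random variable $\|P(a_i^{(n)}, U_n)\|$ concentrates exponentially around its expectation $\E\|P(a_i^{(n)}, U_n)\|$. A Borel--Cantelli argument then upgrades an expectation bound to an almost sure one, reducing the problem to the deterministic estimate $\limsup_n \E\|P(a_i^{(n)}, U_n)\| \le \|P(a_i, u)\|$.

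The core of the argument is this expectation bound, and here I would proceed in two moves. First, a linearization trick reduces controlling the norm of an arbitrary polynomial $P$ to controlling the spectral edge of a self-adjoint linear pencil $L_n = c_0 \otimes 1 + \sum_i c_i \otimes a_i^{(n)} + c \otimes U_n + c^* \otimes U_n^*$ with scalar matrix coefficients, evaluated on the generators: invertibility of $z - P$ is encoded by invertibility of an associated pencil, so it suffices to compare the spectrum of $L_n$ with that of its free counterpart $L = c_0 \otimes 1 + \sum_i c_i \otimes a_i + c \otimes u + c^* \otimes u^*$. Second, one estimates the expected resolvent $\E(z - L_n)^{-1}$ against $(z - L)^{-1}$: expanding the integral over $U_n$ with the Weingarten calculus produces the free expression plus correction terms, and the task is to show these corrections are negligible in norm, uniformly for $z$ away from the spectrum of $L$. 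The base input making this possible is the strong convergence of a tuple of independent Haar unitaries to free Haar unitaries (the unitary analogue of the Haagerup--Thorbj\o{}rnsen theorem), which handles the purely unitary part; the deterministic matrices $a_i^{(n)}$ are then folded in through the coefficients of the pencil.

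The main obstacle is precisely this last estimate on the spectral edge. Distributional convergence controls only traces of polynomials, whereas the operator norm is governed by the extremal eigenvalues, so the heart of the matter is to rule out outlier eigenvalues and, equivalently, to forbid the spectral gap of $z - L_n$ from closing asymptotically. This is the genuinely analytic step: it requires the $1/n$ asymptotic expansion of $\E(z - L_n)^{-1}$ with quantitative control of the remainder, rather than mere convergence of moments. Once that uniform resolvent bound is in hand, combining it with the linearization and the concentration step closes both inequalities and proves strong convergence; all the remaining steps are soft.
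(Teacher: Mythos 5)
The paper itself contains no proof of Theorem \ref{thm:cm}: it quotes the result, attributing the distributional convergence to Voiculescu \cite{voiculescu98} (with a simpler proof in \cite{collins-imrn}) and the strong convergence to Collins--Male \cite{cm}, which in turn rests on \cite{HT} and \cite{male}. Measured against that cited proof, your outline takes a genuinely different route: \cite{cm} does \emph{not} perform a Weingarten expansion of the resolvent of a unitary pencil. It realizes the Haar unitary as the angular part of the polar decomposition of a Ginibre matrix and transfers the whole problem to Male's theorem \cite{male} on GUE/Ginibre matrices together with strongly convergent deterministic matrices, which itself runs the Haagerup--Thorbj\o{}rnsen linearization and master-inequality machinery \cite{HT} in the Gaussian setting, where integration by parts is cleanly available. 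Your soft steps are all correct and standard: almost sure convergence in distribution, the automatic lower bound $\liminf_n\|P(a_i^{(n)},U_n)\|\geq\|P(a_i,u)\|$ from convergence of moments together with \eqref{norm-limit}, and the concentration argument on $\U(n)$ reducing the problem to an expectation bound (the Lipschitz constant is controlled because strong convergence keeps $\|a_i^{(n)}\|$ uniformly bounded).

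There is, however, a genuine gap at the core of your sketch, in two places. First, you cannot ``fold in'' the matrices $a_i^{(n)}$ through the coefficients of the pencil: the linearization trick requires the coefficients $c_0, c_i, c$ to be scalar matrices of \emph{fixed} size, whereas the $a_i^{(n)}$ are $n\times n$ variables of growing dimension. Taking the purely unitary strong convergence (``the unitary analogue of Haagerup--Thorbj\o{}rnsen'') as a black box and then evaluating pencils whose coefficients involve the $a_i^{(n)}$ amounts to invoking strong convergence \emph{with matrix coefficients of growing size}, which is strictly stronger than, and does not follow from, scalar strong convergence; incorporating strongly convergent deterministic matrices is precisely the nontrivial extension that constitutes Male's contribution in the Gaussian case and the raison d'\^etre of \cite{cm}. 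Second, the step you yourself flag as the main obstacle --- uniform control of $\E(z-L_n)^{-1}$ up to the spectral edge via a Weingarten expansion, with a quantitatively negligible remainder --- is asserted rather than executed, and no soft argument supplies it: this is exactly where all known proofs spend their effort, whether by the Gaussian transfer of \cite{cm} or by later direct interpolation/free stochastic calculus arguments for Haar unitaries. As written, then, your proposal is a plausible program, closer in spirit to the direct-unitary proofs that appeared after \cite{cm} than to the paper's cited argument, but its reduction of the mixed case to the purely unitary case is invalid as stated, and its central analytic estimate is missing.
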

Historically, the convergence of distribution is due to Voiculescu, \cite{voiculescu98}. 
A simpler proof was given by \cite{collins-imrn}.
The strong convergence was established in \cite{cm}, which itself heavily relies on earlier works by \cite{HT} and \cite{male}.

\subsection{Haagerup's inequality}\label{sec:haagerup}

The following inequality is due to Haagerup \cite{Haagerup1979}
and it plays a crudial role in operator algebra and free probability theory.

\begin{theorem}\label{thm:haagerup-inequality}[\cite{Haagerup1979}, Lemma 1.4]
Let $F_k$ be the free group on $k$ generators, and let $f$ be a function in $l^2 (F_k)$ supported on 
the finite subspace generated by words in $F_k$ of length $n$. 
We can see it as an element of $B(l^2 (F_k))$ and consider its operator norm $||f||$.
The following holds true:
$$||f||\leq (n+1) ||f||_2.$$
\end{theorem}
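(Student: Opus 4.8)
The plan is to realise $f$ as the left--convolution operator it defines on $\ell^2(F_k)$ and to control its norm by splitting the convolution according to how much cancellation occurs between words. Write $\ell^2(F_k)=\bigoplus_{m\ge 0}H_m$, where $H_m$ is the span of the reduced words of length exactly $m$. For $g\in\ell^2(F_k)$ the action is $(f*g)(x)=\sum_y f(y)\,g(y^{-1}x)$, and since $f$ is supported on words of length $n$, a nonzero term with $g(y^{-1}x)$ supported on $H_m$ forces $x=yw$ with $|y|=n$, $|w|=m$; if exactly $n-i$ of the letters of $y$ cancel against $w$, then $|x|=n+m-2(n-i)=m+2i-n$, with $i\in\{0,\dots,n\}$.

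First I would make this splitting precise by defining, for each $i\in\{0,\dots,n\}$, an operator $T_i$ retaining only those terms in which exactly $n-i$ letters of $y$ cancel. Writing a target word of length $m+2i-n$ uniquely as a reduced product $ac$ with $|a|=i$ and $|c|=m-(n-i)$, one has $(T_ig)(ac)=\sum_{|b|=n-i}f(ab)\,g(b^{-1}c)$, the sum running over reduced products $ab$ of length $n$. By construction $f*g=\sum_{i=0}^{n}T_ig$, and $T_i$ maps $H_m$ into $H_{m+2i-n}$.

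The heart of the matter is the uniform bound $\|T_i\|\le\|f\|_2$. For homogeneous $g\in H_m$ I would estimate $\|T_ig\|_2^2=\sum_{a,c}\bigl|\sum_{b}f(ab)g(b^{-1}c)\bigr|^2$ by Cauchy--Schwarz in the variable $b$, obtaining the factored bound $\bigl(\sum_{a,b}|f(ab)|^2\bigr)\bigl(\sum_{c,b}|g(b^{-1}c)|^2\bigr)$. The first factor equals $\|f\|_2^2$ \emph{exactly}, because every word of length $n$ decomposes uniquely as $ab$ with $|a|=i$, $|b|=n-i$; the second factor is \emph{at most} $\|g\|_2^2$, because the reduced products $b^{-1}c$ are distinct words of length $m$ and hence are counted at most once. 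Since $T_i$ sends the mutually orthogonal subspaces $H_m$ into the mutually orthogonal subspaces $H_{m+2i-n}$, this homogeneous estimate globalises to $\|T_ig\|_2\le\|f\|_2\|g\|_2$ for all $g$.

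Finally I would assemble the pieces by the triangle inequality: $\|f*g\|_2\le\sum_{i=0}^n\|T_ig\|_2\le(n+1)\|f\|_2\|g\|_2$, which is precisely $\|f\|\le(n+1)\|f\|_2$. I expect the only delicate point to be the combinatorial bookkeeping in the Cauchy--Schwarz step, namely verifying that $(a,b)\mapsto ab$ is a bijection onto the length-$n$ words (giving equality in the $f$-factor) while $(b,c)\mapsto b^{-1}c$ is merely injective into the length-$m$ words (giving the inequality in the $g$-factor), together with confirming that the images $T_iH_m$ are genuinely orthogonal across $m$ so that the homogeneous bound passes to all of $\ell^2(F_k)$.
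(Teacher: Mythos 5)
Your proposal is correct, and it coincides with the proof of the result as given in the cited source: the paper itself states the inequality without proof, quoting \cite{Haagerup1979}, and your decomposition of the convolution operator into the $n+1$ pieces $T_i$ indexed by the amount of cancellation, with the Cauchy--Schwarz estimate $\|T_i\|\le \|f\|_2$ on each homogeneous component $H_m$ and the triangle inequality at the end, is precisely Haagerup's original argument for his Lemma 1.4. The combinatorial points you flag all check out as you describe: $(a,b)\mapsto ab$ is a bijection onto the reduced words of length $n$ (non-reduced products have length $<n$, where $f$ vanishes), $(b,c)\mapsto b^{-1}c$ is injective into the length-$m$ words since the length-$(n-i)$ prefix of a reduced word is unique, and the subspaces $T_iH_m\subseteq H_{m+2i-n}$ are mutually orthogonal as $m$ varies for fixed $i$, so the homogeneous bound does globalize.
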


One feature of this note is to show that this Haagerup's inequality also plays a crucial role in Quantum Information Theory. 

\section{Main result}\label{sec:main}

We now describe the model that we use to produce a counterexample. 

\subsection{The model}

Let $U_1^{(n)},\ldots , U_k^{(n)}$ be iid Haar distributed unitaries in $\mathcal U_n$.
We consider the random isometry 
$$W_{k,n}: \C^n\to \C^n\oplus \ldots \oplus \C^n\cong \C^n\otimes \C^k$$
given by
$$x\mapsto k^{-1/2} (U_1^{(n)}x\oplus \ldots \oplus U_k^{(n)} x).$$
Let $\Phi_{k,n}$ be the random channel given by 
$$X\mapsto (\Tr_n \otimes Id_k)W_{k,n}XW_{k,n}^*.$$
In other words,
$$\Phi_{k,n}(X)=(\Tr (U_i^{(n)}XU_j^{(n)*}))_{ij}.$$
It is interesting to note that this channel is the complement of the
random unitary channel
$$X\to k^{-1}\sum_{i=1}^kU_i^{(n)}XU_i^{(n)*}.$$

\subsection{Main Estimate}

\begin{theorem}\label{main-estimate}
With probability one as $n\to \infty$ (with $k$ fixed)
\[
\lim_{n \to \infty} \max_{X \in D(\mathbb{C}^n)} ||\Phi_{k,n} (X)-\tilde I ||_2 \leq \frac{3}{k}.
\]
\end{theorem}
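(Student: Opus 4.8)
The plan is to reduce to pure input states, identify the output with a Gram matrix, and control that Gram matrix in operator norm using strong convergence together with Haagerup's inequality. Recall that, with the normalization coming from the isometry $W_{k,n}$, the channel is $\Phi_{k,n}(X)=k^{-1}(\Tr(U_i^{(n)}XU_j^{(n)*}))_{ij}$, so that its diagonal entries are $k^{-1}\Tr X=k^{-1}$; hence $\tilde I=k^{-1}\Id_k$ and $\Phi_{k,n}(X)-\tilde I$ always has vanishing diagonal. Since $\Phi_{k,n}$ is linear, $X\mapsto\|\Phi_{k,n}(X)-\tilde I\|_2$ is convex, so its maximum over the convex body $D(\C^n)$ is attained at an extreme point $X=xx^*$ with $\|x\|=1$. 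For such $X$ one computes $\Phi_{k,n}(X)_{ij}=k^{-1}\langle U_j^{(n)}x,U_i^{(n)}x\rangle$, i.e. $\Phi_{k,n}(X)=k^{-1}\overline{G}$ where $G=G(x)=(\langle U_i^{(n)}x,U_j^{(n)}x\rangle)_{ij}$ is the Gram matrix of the $k$ unit vectors $U_1^{(n)}x,\dots,U_k^{(n)}x$. Bounding the (normalized) $2$-norm by the operator norm, and using that the transpose/conjugate does not change these norms, I get
\[
\|\Phi_{k,n}(X)-\tilde I\|_2\le\|\Phi_{k,n}(X)-\tilde I\|=k^{-1}\|G(x)-\Id_k\|.
\]

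The problem thus reduces to showing that, almost surely, $\max_{\|x\|=1}\|G(x)-\Id_k\|\le 3$ asymptotically. As $G(x)$ is positive semidefinite with unit diagonal, its eigenvalues lie in $[0,\|G(x)\|]$, whence $\|G(x)-\Id_k\|\le\max(1,\|G(x)\|-1)$; it therefore suffices to prove $\max_{\|x\|=1}\|G(x)\|\le 4$ in the limit. Since $G(x)\ge 0$, one has $\langle c,G(x)c\rangle=\|\sum_i c_iU_i^{(n)}x\|^2$, and taking suprema gives the clean identity
\[
\max_{\|x\|=1}\|G(x)\|=\max_{\|x\|=1}\max_{\|c\|=1}\Big\|\sum_{i=1}^k c_iU_i^{(n)}x\Big\|^2=\max_{\|c\|=1}\Big\|\sum_{i=1}^k c_iU_i^{(n)}\Big\|^2,
\]
so everything comes down to a uniform (in $c$) bound on the degree-one operator $\sum_i c_iU_i^{(n)}$.

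This is where the two tools enter. Iterating Theorem \ref{thm:cm}, almost surely the family $(U_1^{(n)},\dots,U_k^{(n)})$ converges strongly to a family $(u_1,\dots,u_k)$ of free Haar unitaries, which generate a copy of $C^*_r(F_k)$ under $u_i\mapsto\lambda(g_i)$. Applying Haagerup's inequality (Theorem \ref{thm:haagerup-inequality}) to the length-one function $f=\sum_i c_i\delta_{g_i}$ yields the uniform estimate $\|\sum_i c_iu_i\|=\|\lambda(f)\|\le(1+1)\|f\|_2=2\|c\|=2$ for \emph{every} $c$ on the unit sphere, and squaring gives the bound $4$ sought above. Strong convergence provides, for each fixed $c$, the almost sure convergence $\|\sum_i c_iU_i^{(n)}\|\to\|\sum_i c_iu_i\|\le 2$. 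To upgrade this to the maximum over the sphere I would exploit that the maps $c\mapsto\|\sum_i c_iU_i^{(n)}\|$ are Lipschitz uniformly in $n$ (the $U_i^{(n)}$ being unitary), take a finite $\eps$-net of the unit sphere of $\C^k$, invoke the finitely many pointwise almost sure convergences on the net, and let $\eps\to 0$ along a countable sequence. This yields $\limsup_n\max_{\|c\|=1}\|\sum_i c_iU_i^{(n)}\|\le 2$ almost surely, hence $\max_{\|x\|=1}\|G(x)\|\le 4$ and the claimed bound $3/k$.

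I expect the main obstacle to be exactly this final uniformization: strong convergence is a statement about fixed non-commutative polynomials, whereas the quantity to control is a supremum over the continuum of coefficient vectors $c$. The equicontinuity/net argument bridges this gap, and Haagerup's inequality is indispensable precisely because it delivers the bound on the limiting free side simultaneously for all $c$, rather than one $c$ at a time.
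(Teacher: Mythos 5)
Your reduction to pure states, the Gram-matrix identity $\max_{\|x\|=1}\|G(x)\|=\max_{\|c\|=1}\|\sum_i c_iU_i^{(n)}\|^2$, and the net argument upgrading strong convergence to a supremum over coefficient vectors are all sound; Haagerup at word length one does give $\limsup_n\max_{\|c\|=1}\|\sum_i c_iU_i^{(n)}\|\le 2$ almost surely. The gap is your very first inequality, ``(normalized) $2$-norm $\le$ operator norm.'' The norm $\|\cdot\|_2$ in the theorem is the \emph{unnormalized} Hilbert--Schmidt norm $\|A\|_2=(\Tr A^*A)^{1/2}$: this is explicit in the paper's proof, which writes $\|\Phi_{n,k}(X)-\tilde I\|_2^2=\Tr((\Phi_{n,k}(X)-\tilde I)^2)$, and it is forced by the application, since Proposition \ref{thm:single-bound} needs $k\,\Tr(\Phi(X)-\tilde I)^2\le 9/k$ via \eqref{eq:concave}. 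For that norm the inequality goes the other way: $\|A\|_{op}\le\|A\|_2\le\sqrt{k}\,\|A\|_{op}$. What your argument actually proves is $\max_X\|\Phi_{k,n}(X)-\tilde I\|_{op}\le 3/k$ asymptotically, which only yields $\|\Phi_{k,n}(X)-\tilde I\|_2\le 3/\sqrt{k}$ --- a loss of $\sqrt{k}$ that is fatal downstream: fed into \eqref{eq:concave} it gives $\log k-H\le 9$ rather than $9/k$, and no additivity violation follows. The loss is not a crude step you can tighten, since $\Phi_{k,n}(X)-\tilde I$ generically has rank of order $k$, so no operator-norm bound of order $1/k$ can ever produce a Hilbert--Schmidt bound better than order $1/\sqrt{k}$.

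The missing idea is the paper's duality argument at word length \emph{two}. For any $A=(a_{ij})$ and $X\in D(\C^n)$ one has $\Tr[\Phi_{k,n}(X)A]\le\|W_{k,n}^*(A\otimes I_n)W_{k,n}\|=k^{-1}\|\sum_{i,j}a_{ij}U_i^{(n)}U_j^{(n)*}\|$; strong convergence (Theorem \ref{thm:cm}) replaces the matrices by free Haar unitaries, uniformly over bounded $A$ (this is the analogue of your uniformization worry, handled by compactness/continuity, cf.\ Proposition 7.3 of \cite{CFZ2015}), and Haagerup's inequality applied to the length-two words $u_iu_j^*$, $i\neq j$, converts the operator norm into $3\bigl(\sum_{i\neq j}|a_{ij}|^2\bigr)^{1/2}$ --- that is, exactly the unnormalized Hilbert--Schmidt norm of $A$ when $A$ is traceless (the diagonal words $u_iu_i^*=1$ contribute $\Tr A=0$). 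Taking $A=\Phi_{k,n}(X)-\tilde I$, which is traceless, and dividing by $\|A\|_2$ gives the Hilbert--Schmidt bound $3/k$ directly. It is the conversion operator norm $\to$ $\ell^2$ norm of coefficients, at degree two, that captures the factor your route loses; note also that the paper's constant $3$ is $n+1$ with $n=2$ in Theorem \ref{thm:haagerup-inequality}, whereas yours arises as $\max(1,2^2-1)$ --- the numerical agreement is a coincidence.
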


\begin{proof}
For $A=(a_{ij}) \in M_k(\mathbb C)$ and $X \in D(\mathbb{C}^n)$, 
\begin{align}\label{inequ-key}
\Tr \left[ \Phi_n(X) A\right]& = \Tr \left[ \Tr_{\mathbb C^n} [W_{k,n} X W_{k,n}^* ]A\right] 
=  \Tr \left[  W_{k,n} X W_{k,n}^*(A\otimes I_n )\right] \\
& \leq \left\| W_{k,n}^* (A\otimes I_n )W_{k,n}\right\| = 
k^{-1}||\sum_{i,j} a_{ij}U_i^{(n)}U_j^{(n)*}|| 
\end{align}
Let us define 
$$|||A|||= k^{-1}||\sum_{i,j} a_{ij}u_iu_j^*|| .$$
This quantity can be checked to be a norm on $M_k(\mathbb C)$; as a matter of fact,
it plays a role similar to the  $t$-norm introduced in in \cite{BCN2013} in the context of random quantum channels.

By Theorem \ref{thm:cm}, with probability one on any  space of sequences of random matrices
having the appropriate marginals, we have:
$$\lim_n ||\sum_{i,j} a_{ij}U_i^{(n)}U_j^{(n)*}||\to |||(a_{ij})|||.$$
By the Haagerup inequality (Theorem \ref{thm:haagerup-inequality})
the right hand side is bounded as follows: 
$|||A|||\leq k^{-1}|\Tr A | + \frac{3}{k}\sqrt{\sum_{i\neq j}|a_{ij}^2|}.$ 
In particular, in the case of traceless matrices, 
$$|||A|||\leq \frac{3}{k}||A||_2.$$

By a standard compactness and continuity argument, 
the inequality \eqref{inequ-key}
 holds uniformly on any bounded choice of $A$.
 We refer for example to Proposition 7.3 of 
 \cite{CFZ2015} 
for details. 

In turn, taking $A= \Phi_n (X)-\tilde I$,
we get, with probability one, for any $\varepsilon >0$, $n$ large enough,
\begin{align*}
||\Phi_{n,k}(X)-\tilde I||_2^2=\Tr ((\Phi_{n,k}(X)-\tilde I)^2)\leq \Tr ((\Phi_{n,k}(X)-\tilde I)\Phi_{n,k}(X)) \\
\leq|||\Phi_{n,k}(X)-\tilde I |||\leq \frac{3}{k} ||\Phi_{n,k}(X)-\tilde I ||_2 (1+\varepsilon)
\end{align*}
We refer to Equation (18) of \cite{CFZ2015} 
for further detail. Dividing both sides of the above inequality by
$||\Phi_{n,k}(X)-\tilde I ||_2$ implies, that 
with probability one,
$$\limsup_n ||\Phi_{n,k}(X)-\tilde I||_2\leq \frac{3}{k}, $$
which is what we needed.

\end{proof}

\subsection{Application to violation}

We recall the definition of the conjugate
of the quantum channel $\Phi_{n,k}$: 
$$\overline \Phi_{n,k} : X\to k^{-1}\sum_{i=1}^k \overline{U_i^{(n)}}X U_i^{(n)t}.$$
We first recall the following
\begin{proposition}\label{thm:product-bound}
The following holds true
\[
H^{\min} (\Phi_{n,k}\otimes \overline \Phi_{n,k} )  \leq 2\log k -\frac{\log k}{k}
\]
\end{proposition}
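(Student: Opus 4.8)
The plan is to exhibit a single input state on $\C^n \otimes \C^n$ whose output under $\Phi_{n,k} \otimes \overline{\Phi}_{n,k}$ has entropy at most $2\log k - \frac{\log k}{k}$. The natural candidate, following the Hastings-type counterexamples, is the maximally entangled state $\ket{\psi} = n^{-1/2}\sum_{i=1}^n e_i \otimes e_i \in \C^n \otimes \C^n$, with associated density matrix $X = \ket{\psi}\bra{\psi}$. Since the minimum output entropy is a minimum over all inputs, it suffices to estimate $H\bigl((\Phi_{n,k} \otimes \overline{\Phi}_{n,k})(X)\bigr)$ from above for this particular $X$, and then conclude that the minimum is no larger.

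First I would compute the output $(\Phi_{n,k} \otimes \overline{\Phi}_{n,k})(X)$ explicitly. Writing $\Phi_{n,k}(Y) = (\Tr(U_i Y U_j^*))_{ij}$ and the conjugate channel as $\overline{\Phi}_{n,k}(Y) = k^{-1}\sum_{\ell} \overline{U_\ell}\, Y\, U_\ell^t$, the key algebraic point is that the maximally entangled state is constructed precisely so that applying a unitary on one leg mirrors applying its conjugate on the other leg; concretely, $(\Id \otimes \overline{U})\ket{\psi} = (U^t \otimes \Id)\ket{\psi}$ and similar relations collapse the tensor action. Because $\Phi$ and $\overline{\Phi}$ are built from the \emph{same} unitaries $U_i^{(n)}$, the diagonal terms $i = j$ in the sum defining the output will contribute a large, coherent rank-one piece. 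The plan is to show that the output matrix has one distinguished eigenvalue of size roughly $1/k$ (coming from the $k$ matched pairs among the $k^2$ index pairs, each contributing $k^{-1}\cdot k^{-1}$), sitting on top of a bulk that carries the remaining weight.

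Next I would bound the entropy using this spectral structure. The general principle is that pulling a weight $p$ out of a distribution into a single atom, with the remaining $1-p$ spread over the other $k^2 - 1$ or so levels, gives entropy at most $H_2(p) + (1-p)\log(k^2 - 1)$ where $H_2$ is the binary entropy; optimizing or simply plugging in the value $p \approx 1/k$ coming from the matched-index contribution yields the stated bound $2\log k - \frac{\log k}{k}$. The arithmetic here should be arranged so that the gain $\frac{\log k}{k}$ below the trivial maximum $2\log k = \log k^2$ is exactly the entropy deficit caused by the enhanced eigenvalue. This is a deterministic, finite-$n$ computation: crucially it does not require the strong convergence machinery or Haagerup's inequality at all, since it is only an \emph{upper} bound on a minimum and any single good input suffices.

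The main obstacle I anticipate is the precise bookkeeping of the output matrix of $\Phi_{n,k} \otimes \overline{\Phi}_{n,k}$ on the entangled state: one must correctly identify which of the $k^2$ blocks reinforce coherently and verify that the distinguished eigenvalue really equals $1/k$ (plus possibly off-diagonal fluctuations that only help, by concavity or by the fact that they redistribute weight off the top eigenvector). Care is needed because the bound must hold exactly, not merely almost surely in the limit, so I would avoid any asymptotic argument and instead check that the matched-index term $\frac{1}{k}\,\ket{\psi}\bra{\psi}$-type contribution is genuinely present as a rank-one summand of the output for every finite $n$, making the top eigenvalue at least $\frac{1}{k}$ and hence forcing the entropy down by the required amount via monotonicity of the entropy under majorization.
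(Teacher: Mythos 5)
Your overall plan---feed in the maximally entangled state $\omega_n=|\psi_n\rangle\langle\psi_n|$, note that the matched indices $i=j$ contribute coherently, and bound the output entropy---is exactly the argument behind the result (the paper itself gives no proof, deferring to Hastings and to Section 5.1 of \cite{FKM2010}). But there is a genuine quantitative gap in your entropy-counting step, and it sits precisely where you hedge (``the other $k^2-1$ or so levels'', ``$p\approx 1/k$''). Knowing only that the top eigenvalue is at least $1/k$ and that the output has rank at most $k^2$ does \emph{not} yield the stated constant: the extremal spectrum with one eigenvalue $1/k$ and the rest spread equally over $k^2-1$ levels has entropy
\[
\frac{\log k}{k}+\Bigl(1-\frac{1}{k}\Bigr)\log\bigl(k(k+1)\bigr)
=2\log k-\frac{\log k}{k}+\Bigl(1-\frac{1}{k}\Bigr)\log\Bigl(1+\frac{1}{k}\Bigr),
\]
which strictly exceeds the target by roughly $1/k$. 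Since the violation argument needs the slack $(\log k-18)/k$, an uncontrolled extra $O(1/k)$ term would fail the proposition as stated, so ``simply plugging in $p=1/k$'' with majorization alone does not close the proof.

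The fix, which is what the cited proofs actually do, uses slightly more structure than a top-eigenvalue bound. Because $(U\otimes\overline{U})|\psi_n\rangle=|\psi_n\rangle$ for every unitary $U$ (your intermediate identity should read $(I\otimes\overline{U})|\psi_n\rangle=(U^{*}\otimes I)|\psi_n\rangle$, not $(U^{t}\otimes I)|\psi_n\rangle$, but this invariance is what matters), all $k$ diagonal terms coincide \emph{exactly}, so the output is the explicit convex mixture
\[
\rho=\frac{1}{k}\,\omega_n+\frac{1}{k^2}\sum_{i\neq j}(U_i\otimes\overline{U_j})\,\omega_n\,(U_i\otimes\overline{U_j})^{*},
\]
a mixture of pure states with weight vector $p=(1/k,\,1/k^2,\ldots,1/k^2)$ having $k^2-k$ entries equal to $1/k^2$---crucially $k^2-k$, not $k^2-1$. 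The ensemble bound $H\bigl(\sum_i p_i|\phi_i\rangle\langle\phi_i|\bigr)\leq H(p)$ (note that concavity of $H$ goes the wrong way for upper-bounding mixtures, contrary to your parenthetical) then gives exactly
\[
H(\rho)\leq \frac{\log k}{k}+\frac{k^2-k}{k^2}\cdot 2\log k=2\log k-\frac{\log k}{k}.
\]
Equivalently, your majorization argument can be repaired by recording that the bulk is carried by at most $k^2-k$ pure states. One last bookkeeping point: the proposition pairs the complementary channel $\Phi_{n,k}$ with the conjugated random unitary channel $\overline{\Phi}_{n,k}$; the computation above is for the random unitary pair, and it transfers because for a pure input a channel and its complement produce outputs with the same nonzero spectrum, hence equal entropy. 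With these repairs your deterministic, finite-$n$ strategy is correct and is indeed independent of the strong convergence and Haagerup machinery, exactly as you observe.
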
 
For the proof, we refer to the original paper by Hastings, or \cite{FKM2010}, section 5.1.
Second, we quote the following bound:
\begin{align}\label{eq:concave}
\log k - H(X)  \leq k \cdot \Tr (X-\tilde I)^2 
\end{align}
for $X \in D (\mathbb C^n)$.
Here, we refer to  \cite{Hastings2009} (see also for example Lemma 2.2 of \cite{Fukuda2014}). 
In turn, Equation \eqref{eq:concave}, together with Theorem \ref{main-estimate} yields the following 
\begin{proposition}\label{thm:single-bound}
\[
H^{\min} (\Phi) \geq \log k-9/k
\]
\end{proposition}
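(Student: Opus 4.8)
The plan is to combine the main estimate of Theorem \ref{main-estimate} with the concavity bound of Equation \eqref{eq:concave} in a completely direct way. The statement to prove is a lower bound on the minimum output entropy of the single channel $\Phi = \Phi_{n,k}$, so I would fix an arbitrary input state $X \in D(\mathbb C^n)$ and bound $H(\Phi(X))$ from below uniformly in $X$. The key observation is that Equation \eqref{eq:concave}, applied to the output $\Phi(X) \in D(\mathbb C^k)$ rather than to $X$ itself, reads $\log k - H(\Phi(X)) \leq k \cdot \Tr(\Phi(X) - \tilde I)^2 = k \cdot \|\Phi(X) - \tilde I\|_2^2$, where $\tilde I = I_k/k$ is the maximally mixed state on the output space and the equality is just the definition of the Hilbert--Schmidt norm.

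The central step is then to feed in the quantitative control provided by Theorem \ref{main-estimate}. That theorem asserts that, with probability one as $n \to \infty$, $\max_{X \in D(\mathbb C^n)} \|\Phi_{k,n}(X) - \tilde I\|_2 \leq 3/k$. Squaring gives $\|\Phi(X) - \tilde I\|_2^2 \leq 9/k^2$ uniformly over all inputs $X$. Substituting this into the rearranged concavity bound yields $\log k - H(\Phi(X)) \leq k \cdot 9/k^2 = 9/k$, hence $H(\Phi(X)) \geq \log k - 9/k$ for every $X$ simultaneously. Taking the minimum over $X \in D(\mathbb C^n)$ preserves this inequality, giving exactly $H^{\min}(\Phi) \geq \log k - 9/k$ as claimed.

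I expect the only genuine subtlety to be a bookkeeping one: Theorem \ref{main-estimate} is an almost sure asymptotic statement (the bound holds with probability one in the limit $n \to \infty$, up to an arbitrarily small $\varepsilon$), so the resulting lower bound on $H^{\min}(\Phi)$ is likewise understood asymptotically and almost surely, not for every fixed finite $n$. The main estimate already carries the uniformity over $X$ built into the $\max$, so no additional compactness argument is needed here; it is enough to note that the $\max$ over density matrices in Theorem \ref{main-estimate} is precisely what lets us turn a pointwise concavity bound into a uniform lower bound and then pass to the minimum defining $H^{\min}$. The $\varepsilon$ appearing in the proof of Theorem \ref{main-estimate} can be absorbed with room to spare, since squaring $3/k$ gives $9/k^2$ and the statement only claims $9/k$ rather than a sharper constant. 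Thus the argument is essentially a one-line substitution, and the main work has already been carried out in establishing Theorem \ref{main-estimate}.
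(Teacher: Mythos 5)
Your proof is correct and is exactly the argument the paper intends: the paper derives Proposition \ref{thm:single-bound} by the same one-line substitution of the bound $\|\Phi(X)-\tilde I\|_2\leq 3/k$ from Theorem \ref{main-estimate} into Equation \eqref{eq:concave} applied to the output state $\Phi(X)\in D(\mathbb{C}^k)$. Your reading of \eqref{eq:concave} as a statement about $k\times k$ density matrices (despite the paper writing $X\in D(\mathbb{C}^n)$) and your caveat that the conclusion is an almost sure, asymptotic-in-$n$ statement are both the right interpretations.
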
 
Therefore, putting things together, we get
\begin{theorem}
With probability one, for $n$ large enough, there is a violation for the unitary quantum channel, for 
$k$ large enough. 
\end{theorem}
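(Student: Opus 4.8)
The plan is to assemble the two bounds already in hand. The statement to prove is that, for suitable $k$ and $n$, the pair $(\Phi_{n,k},\overline\Phi_{n,k})$ witnesses a strict violation, i.e.
$$H^{\min}(\Phi_{n,k}\otimes\overline\Phi_{n,k}) < H^{\min}(\Phi_{n,k}) + H^{\min}(\overline\Phi_{n,k}).$$
So the entire task reduces to comparing an upper bound for the left-hand side against a lower bound for the right-hand side, both of which are supplied by the preceding propositions.

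First I would bound the right-hand side from below. Proposition \ref{thm:single-bound} gives $H^{\min}(\Phi_{n,k}) \geq \log k - 9/k$. The same estimate applies verbatim to $\overline\Phi_{n,k}$: the only ingredient in the proof of the single-channel bound is that the defining unitaries are Haar distributed, and the complex conjugates $\overline{U_i^{(n)}}$ are again Haar distributed, so Theorem \ref{main-estimate} and hence Proposition \ref{thm:single-bound} hold for $\overline\Phi_{n,k}$ as well. Adding the two lower bounds yields $H^{\min}(\Phi_{n,k}) + H^{\min}(\overline\Phi_{n,k}) \geq 2\log k - 18/k$. For the left-hand side, Proposition \ref{thm:product-bound} furnishes the upper bound $H^{\min}(\Phi_{n,k}\otimes\overline\Phi_{n,k}) \leq 2\log k - \tfrac{\log k}{k}$.

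It then remains a matter of arithmetic: a violation occurs as soon as $2\log k - \tfrac{\log k}{k} < 2\log k - \tfrac{18}{k}$, that is, as soon as $\log k > 18$, equivalently $k > e^{18}$. I would therefore fix any such $k$; for that fixed $k$, Theorem \ref{main-estimate} and Proposition \ref{thm:single-bound} hold with probability one once $n$ is large enough, while the bound of Proposition \ref{thm:product-bound} holds unconditionally. Combining the three inequalities gives the displayed strict inequality with probability one for all large $n$, which is exactly the claimed additivity violation.

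The step I expect to require the most care is not any single estimate but the bookkeeping of the quantifiers and the almost-sure clause: the constant $9/k$ issues from a probability-one asymptotic statement in Theorem \ref{main-estimate}, so one must fix $k$ (large enough that $\log k > 18$) \emph{before} invoking the almost-sure convergence as $n\to\infty$, and one must confirm that the single-channel bound genuinely transfers to the conjugate channel $\overline\Phi_{n,k}$ and not merely to $\Phi_{n,k}$. Once those two points are settled, the comparison of the two leading-order-cancelling bounds is immediate.
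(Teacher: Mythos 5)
Your proposal is correct and follows essentially the same route as the paper: combine the product bound of Proposition \ref{thm:product-bound} with the single-channel bound of Proposition \ref{thm:single-bound} (applied to both factors) and observe that $2\log k-\frac{\log k}{k}<2\bigl(\log k-\frac{9}{k}\bigr)$ forces $\log k>18$, i.e.\ $k\geq e^{18}$. Your two points of extra care --- fixing $k$ before invoking the almost-sure $n\to\infty$ limit, and transferring the bound to $\overline\Phi_{n,k}$ (which also follows deterministically, since $\overline\Phi_{n,k}(X)=\overline{\Phi_{n,k}(\overline X)}$ and entropy is conjugation-invariant) --- are left implicit in the paper but are handled correctly in your write-up.
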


\begin{proof}
According to Proposition \ref{thm:product-bound} and Equation \eqref{eq:concave},
we need to ensure that 
$$2\log k -\frac{\log k}{k}< 2(\log k-\frac{9}{k}).$$
This is equivalent to
$$\log k\geq 18,$$
and the inequality will hold as soon as $k\geq e^{18}$.
\end{proof}

As a conclusion, let us remark that 
the bound is not as good as the one obtained in \cite{BCN2013}, or many other paper of the bibliography.
However, the only proofs of operator algebraic flavour so far did not allow for random unitary channels.
The primary interest of the proof of this paper is that it shows that Haagerup's inequality can be expected to
play a more important role in the study of random quantum channels. 

Let us also add that there has been a line of research (\cite{HLSW04,Au09}) where it was proved that 
randomizing channels in dimension $n$  with $k$ Haar unitary 
independent operators sends all states to a density matrix
whose operator norm is less than $C/k$ where $C$ is a universal constant. 
Although the previous papers were considering the case where both $k,n\to\infty$, we focus on the
case where only $n\to \infty$ for a fixed value of $k$. In this case, 
it is actually possible to compute precisely the optimal constant, namely, $C=4(k-1)/k$.
More importantly, we can give precise information about the $L^2$-distance to the maximally mixed state.
Specifically, the previous results (and the triangle inequality) imply that with overwhelming 
probability, they remain within $L^2$ distance $9/k$ of the maximally mixed state. 
This rules out, for example, the possibility that many eigenvalues of a mixed state be large (say, more than $3/k$).
Such results did not follow from \cite{HLSW04,Au09}.

\end{document}